\def\phi{\varphi}
\def \P {\mathbb P}
\begin{document}

\newtheorem{nt}{Notation}
\newtheorem{cor}{Corollary}
\newtheorem{lem}{Lemma}
\newtheorem{predl}{Proposition}
\newtheorem{theorem}{Theorem}
\newtheorem{defin}{Definition}
\newtheorem{zam}{Remark}

\bigskip
\centerline{\bf Efficiency of Exponentiality Tests Based }
\centerline{\bf on a Special Property of Exponential Distribution}

\bigskip

\centerline{Nikitin Ya. Yu.$^{\diamond,\dagger,}$\footnote{Corresponding author, e-mail {\it yanikit47@gmail.com}},\, 
and Volkova K. Yu.$^{\diamond}$}

\bigskip

\centerline{\it $^\diamond$  Saint-Petersburg State University, 7/9 Universitetskaya nab., }
\centerline{ \it Saint-Petersburg, 199034 Russia.}

\medskip

\centerline{\it $^\dagger$ National Research University - Higher School of Economics,}
\centerline{\it  Souza Pechatnikov, 16, St.Peters\-burg 190008, Russia}

\bigskip

\begin{abstract}
New goodness-of-fit tests for exponentiality based on a particular property of exponential law are constructed.
Test statistics are functionals of $U$-empirical processes. The first of these statistics is of integral type, the second one is a Kolmogorov type statistic. We show that the kernels corresponding to our statistics are non-degenerate. The limiting distributions and large deviations  of new statistics under the null hypothesis are described.
Their local Bahadur efficiency for various parametric alternatives is calculated and is compared with simulated powers of new tests. Conditions of local optimality of new statistics in Bahadur sense are discussed and examples of "most favorable" alternatives are given. New tests are applied to reject the hypothesis of exponentiality for the length
of reigns of Roman emperors which was intensively discussed in recent years.
\end{abstract}

\vskip10pt

\noindent{\bf Keywords}: testing of exponentiality, large deviations, U-statistics, Bahadur efficiency.

\vskip10pt

\noindent{\bf 2010 Mathematics Subject Classification:} 60F10,\ 62F03, \ 62G20,\ 62G30.

\vskip25pt

\section{Introduction}
\bigskip

\indent The general problem of exponentiality testing is stated as follows. Let $X_1,\ldots,X_n$ be
nonnegative independent observations  having a continuous distribution
function (df) $F$ and a density $f.$ We wish to test
the composite null-hypothesis $ H_0: F(x)$ is a df of an exponential law with density  $f(x)=\lambda e^{-\lambda x}, x \geq 0,$
where $ \lambda > 0$ is an unknown scale parameter, against the following alternative: $F$ is a df of a nonexponential law.

There exist numerous tests of exponentiality based on various ideas \cite{Ahsan}, \cite{A90}, \cite{BB95}, \cite{DY}, \cite{HM05}, \cite{NJ02}.
Among them a good few tests are based on characterizations. This is a relatively fresh idea which manifests growing popularity in goodness-of-fit testing,
and in particular, in exponentiality testing, see, e.g., \cite{Ang},  \cite{BarHen}, \cite{HM02}, \cite{JansV}, \cite{Jovan}, \cite{Milo}, \cite{Niknik}, \cite{NikVol}, \cite{NouArg}, \cite{Volk}.

Recently Noughabi and Arghami proved and used in \cite[Theor.1]{NouArg}  the following "characterization" of
exponential law for testing of exponentiality:\\
\indent {\it Let $X_1,X_2$ be two independent identically distributed nonnegative rv's having a continuous df $F.$ Then $Y=X_1/X_2 $ has the df $F_{(2,2)}$ if and only if $F$ is exponential. Here
$F_{(2,2)}$ is the df of Fisher distribution with 2 and 2 degrees of freedom so that
$$
F_{(2,2)}(y)=\frac{y}{1+y},\, \, y \geq 0.
$$}
 
 In fact this property  {\it is not} the proper  characterization of exponential law. This is known since  the paper of Kotlarski \cite{Kotla} which was preceded by the work of Mauldon \cite{Mau}. In particular, Kotlarski gave three examples of {\it non-exponential} densities for $X_1$ and $X_2$ under which the distribution of $Y$ is {\it still} $F_{2,2}$. These three densities are
$$
\lambda x^{-2} \exp(-\lambda x^{-1}){\bf 1}\{x>0\},\quad (1+x^2)^{-\frac32}{\bf 1}\{x>0\},\quad  \mbox{and} \quad  x(1+x^2)^{-\frac32}{\bf 1}\{x>0\}.
$$
Presumably Noughabi and Arghami got the erroneous result because of inaccurate application of the characterization
result of Kotz and Steutel \cite{Kotz}. The same concerns item iii) of their Theorem 1 in \cite{NouArg}.

However, one can build the statistical tests based on properties of distributions which are not the proper characterizations as well. Of course, this will lead to inconsistency of such tests against {\it certain} alternatives. But many famous tests well known in statistical practice are inconsistent against certain {\it special} alternatives, for instance, the chi-square test, the Wilcoxon test (and many other rank tests), the Gini test, and even the likelihood ratio test.

Moreover, according to the usual concepts of testing statistical hypotheses, the evidence can be sufficient only for the rejection of the null-hypothesis $H_0$. On the contrary, its definitive acceptance is hardly possible but any new test "failing to reject"  $H_0$ gradually brings the statistician to the perception of the validity of $H_0$.

The aim of the present paper is to test the hypothesis $H_0$ using the same property of exponential law as used in \cite{NouArg} and formulated above. We will construct two test statistics which turn out to be quite sensitive and efficient. We justify it by calculation of their local Bahadur efficiency against common alternatives and by simulation of their power.

Consider instead of the standard empirical df
$$F_n(t)=n^{-1}\sum_{i=1}^n\textbf{1}\{X_i<t\}, t \geq 0,
$$
the \, $U$-empirical df
\begin{eqnarray*}
H_n(t)=\frac{1}{n(n-1)}\sum\limits_{1\leq i< j
\leq n}\left(\textbf{1}\left\{\frac{X_{i}}{X_{j}}<t\right\}+\textbf{1}\left\{\frac{X_{j}}{X_{i}}<t\right\}\right).
\end{eqnarray*}

It is known that the properties of $U$-empirical df's are similar
to the properties of usual empirical df's, see \cite{HJS}, \cite{Jan}. Hence for large $n$ and under $H_0$ the df $H_n$ should be close to Fisher's df $F_{(2,2)},$
 and we can measure their closeness using some test statistics.

We suggest two scale-invariant statistics
\begin{align}
W_n&=\int_{0}^{\infty} \left(\frac{t}{1+t}-H_n(t)\right)\mu e^{-\mu t }dt, \mu >0 ,\label{W_n}\\
D_n&=\sup_{t \geq 0}\mid \frac{t}{1+t}-H_n(t)\mid\label{D_n},
\end{align}
assuming that their large absolute values are critical. We have inserted the exponential weight with some indefinite value of $\mu>0$ under the sign of integral in order to guarantee its convergence but for brevity we omit $\mu$ in the notation of statistic.

We discuss the limiting distributions of these statistics under the null hypothesis and calculate
their efficiencies against common alternatives. We use the notion of local exact Bahadur efficiency (BE)
\cite{Bahadur}, \cite{Nik},
as the statistic $D_n$ has the nonnormal limiting distribution, and
hence the Pitman approach to the calculation of efficiency is not applicable.
However, it is known that the local BE and the limiting Pitman efficiency
usually coincide, see  \cite{Wie}, \cite{Nik}.

The  large deviation asymptotics is the key tool for the evaluation of the
exact BE, and we address this question using the results of \cite{nikiponi} and \cite{Niki10}.
Finally, we study the conditions of local optimality of our tests and describe the
"most favorable" alternatives for them.

We present the simulated powers of new tests and enlarge the paper by the example of application to real data. Namely, as an application of new exponentiality tests, we examine the interesting question on the durations of reigns for Roman emperors discussed by Khmaladze and his coauthors \cite{Khmal}, \cite{Khm}. Our tests firmly reject the hypothesis of exponentiality, and this contradicts the findings of Khmaladze and his team,  see also \cite{Keague} and \cite{Pisk}.

We stress that usually in the papers on testing based on characterizations one uses the equality in distribution
of two statistics $T_1$ and $T_2$:
$$
T_1(X_1,\dots,X_k)\, \stackrel{d}{=} \, T_2(X_1,\dots,X_m)
$$
which characterizes the family of distributions or some specific property, e.g., symmetry of distribution. But in our paper we use a {\it different relation} when a certain statistic {\it has the prescribed distribution}, and this characterizes or  {\it strongly restraints} the distribution of the sample. It seems probable that other tests of fit can be build on the ground of this apparently new approach.

\section{ Integral statistic $W_{n}.$}

 \subsection{Limiting properties of statistic $W_{n}.$}

\indent The statistic $W_{n}$ is exactly the $U$-statistic of degree  2  with
the centered kernel
\begin{gather*}
\Phi(X,Y)=
1- \mu e^{\mu}E_1(\mu)- \frac{1}{2}e^{-\mu \frac{X}{Y}}-\frac{1}{2}e^{-\mu \frac{Y}{X}} ,
\end{gather*}
where $$
E_1 (\mu)= Ei(1,\mu)=\int_{1}^{\infty}e^{-\mu t}t^{-1}dt, \ Re \ \mu > 0 ,
$$
is the exponential integral, see \cite[Ch.5]{Abr}.

Let $X, Y$ be independent rv's from the standard exponential distribution. To prove that
 the kernel $\Phi(X, Y)$ is non-degenerate, let us calculate its projection $\phi_{\mu}(s).$ For a fixed $X=s, \, s\geq 0$ we have:
\begin{gather*}
\phi_\mu(s) := E(\Phi(X, Y)\mid X=s) =1- \mu e^{\mu}E_1(\mu)- \frac12 Ee^{-\mu \frac{s}{Y}}-\frac12 Ee^{-\mu \frac{Y}{s}}.
\end{gather*}

After some computations we find that the projection $\phi_\mu (s)$ is equal to
$$
\phi_\mu (s) = 1- \mu e^{\mu}E_1(\mu)- \sqrt{\mu s}\,K_1( 2 \sqrt{\mu s})- \frac{s}{2(\mu+s)}.\label{phi}
$$
where $K_1(y)$ is the modified Bessel function of the second kind.

The mean of this projection is equal to zero. Its variance under $H_{0}$ and for arbitrary value of $\mu>0$ equals $\Delta_W^2 (\mu) = E\phi_\mu^2(X),$
 it is positive  and can be obtained using  numerical methods (see Fig. 1), according to the formula
\begin{gather*}
\Delta_W^2(\mu) = \int_{0}^{\infty} \phi_\mu^2 (s) e^{-s} ds .
\end{gather*}

\begin{figure}[h!]
\begin{center}
\includegraphics[scale=0.3]{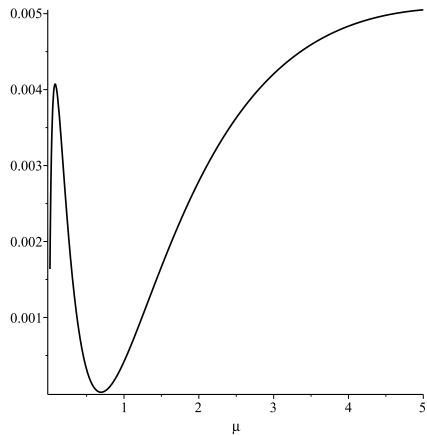}\caption{Plot of the function $\Delta_W^2(\mu)$. }
\end{center}
\end{figure}

Therefore the kernel $\Phi$ is centered and non-degenerate.
We can apply Hoeffding's theorem on asymptotic normality of $U$-statistics, see \cite{Hoeffding}, \cite{Kor},
which implies the following result:
\begin{theorem}
Under null hypothesis as $n \rightarrow \infty$ the statistic $\sqrt{n}W_{n}$ is asymptotically normal
so that
$$\sqrt{n}W_{n} \stackrel{d}{\longrightarrow}{\cal
{N}}(0,4\Delta_W^2(\mu)).$$
\end{theorem}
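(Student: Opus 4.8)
The plan is to recognize that the whole statement follows directly from Hoeffding's central limit theorem for non-degenerate $U$-statistics, so that the real work is simply to check that its hypotheses hold, almost all of which has already been done above. First I would make explicit that $W_n$ coincides with the genuine $U$-statistic $\binom{n}{2}^{-1}\sum_{1\le i<j\le n}\Phi(X_i,X_j)$ of degree $m=2$. This comes from integrating the defining expression \eqref{W_n} term by term: using $\int_0^\infty \mathbf 1\{a<t\}\,\mu e^{-\mu t}\,dt = e^{-\mu a}$ inside $H_n(t)$ turns each summand into $\tfrac12 e^{-\mu X_i/X_j}+\tfrac12 e^{-\mu X_j/X_i}$, while the deterministic part gives $\int_0^\infty \tfrac{t}{1+t}\,\mu e^{-\mu t}\,dt = 1-\mu e^{\mu}E_1(\mu)$, and one recovers exactly the symmetric kernel $\Phi$ displayed above, including the coefficient $1/2$ on the two exponential terms.

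Next I would verify the two hypotheses of Hoeffding's theorem. Centering, $E\Phi(X,Y)=0$ under $H_0$, is already recorded, since the projection $\phi_\mu(s)$ was shown to have zero mean and hence so does the kernel. Square-integrability $E\Phi^2(X,Y)<\infty$ is immediate because $\Phi$ is bounded: each factor $e^{-\mu X/Y}$ and $e^{-\mu Y/X}$ lies in $[0,1]$ and the constant $1-\mu e^{\mu}E_1(\mu)$ is finite, so in fact every moment of the kernel exists. Non-degeneracy is the decisive structural input, and it too is in hand: the first projection of $\Phi$ is precisely $\phi_\mu(s)$, whose variance $\Delta_W^2(\mu)=E\phi_\mu^2(X)$ is strictly positive for every $\mu>0$, as exhibited numerically in Fig. 1.

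With these checks complete, I would invoke Hoeffding's theorem in its standard form: for a non-degenerate $U$-statistic of degree $m$ with a centered, square-integrable kernel, $\sqrt n\,U_n$ is asymptotically normal with variance $m^2\zeta_1$, where $\zeta_1$ is the variance of the first projection. Substituting $m=2$ and $\zeta_1=\Delta_W^2(\mu)$ produces the coefficient $4$ and yields exactly the asserted limit $\sqrt n\,W_n \stackrel{d}{\longrightarrow}\mathcal N(0,4\Delta_W^2(\mu))$.

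The argument presents no genuine obstacle; it is essentially a textbook application. If anything deserves care, it is twofold. First, one must check that the integral representation \eqref{W_n} collapses \emph{exactly} onto the stated kernel $\Phi$, so that the additive constant and the projection computation are internally consistent; this is a routine but bookkeeping-sensitive step. Second, the factor $4$ in the limiting variance is the nontrivial output of the Hoeffding decomposition (it is $m^2$, not $m$), and it rests on the positivity of $\Delta_W^2(\mu)$ uniformly in $\mu>0$, which guarantees a nondegenerate Gaussian limit rather than a degenerate one with a different rate; the paper secures this positivity by numerical evaluation rather than a closed-form bound.
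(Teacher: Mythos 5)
Your proposal is correct and follows essentially the same route as the paper: identify $W_n$ as a degree-2 $U$-statistic with the centered, bounded kernel $\Phi$, establish non-degeneracy via the positivity of $\Delta_W^2(\mu)=E\phi_\mu^2(X)$, and invoke Hoeffding's theorem to obtain the limiting variance $m^2\zeta_1=4\Delta_W^2(\mu)$. Your explicit term-by-term integration showing that \eqref{W_n} collapses onto $\Phi$ is a useful bookkeeping check that the paper states without detail, but it is not a different argument.
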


The (logarithmic) large deviation asymptotics of the sequence of statistics $W_{n}$ under $H_0$ follows from the following
result. It was derived using the theorem on large deviations (see again \cite{nikiponi},
 \cite{Niki10}), applied to the centered, bounded and non-degenerate kernel $\Phi.$
\begin{theorem}
For  $a>0$ under $H_0$ one has
$$
\lim_{n\to \infty} n^{-1} \ln \P ( W_n >a) = - f_W(a),
$$
where the function $f_W$ is continuous for sufficiently small $a>0,$ and $$
f_W(a)  = \frac{a^2}{8 \Delta_W^2(\mu)}(1 + o(1))  , \, \mbox{as } \, a \to 0.
$$
\end{theorem}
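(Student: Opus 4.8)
The plan is to apply the general large-deviation theorem for non-degenerate $U$-statistics with bounded kernels, which is precisely the setting assembled in the preceding paragraphs and codified in \cite{nikiponi} and \cite{Niki10}. First I would record that the kernel $\Phi$ fulfils all the hypotheses of that theorem: it is \emph{centered} (its mean under $H_0$ is zero), it is \emph{bounded} --- since $e^{-\mu X/Y}$ and $e^{-\mu Y/X}$ both take values in $[0,1]$ and the remaining terms $1-\mu e^{\mu}E_1(\mu)$ form a finite constant --- and it is \emph{non-degenerate}, because its projection $\phi_\mu$ has strictly positive variance $\Delta_W^2(\mu)>0$, as displayed in Fig.~1. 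Boundedness is the essential structural feature here: it guarantees that all exponential moments of $\Phi$ exist, so the Cram\'er-type machinery underlying the cited results applies verbatim.

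Next I would quote the conclusion of that theorem. For a non-degenerate bounded kernel of degree $m$ it asserts that the limit $\lim_{n\to\infty} n^{-1}\ln \P(W_n>a)$ exists and equals $-f_W(a)$, with $f_W$ continuous, strictly positive and real-analytic for all sufficiently small $a>0$. This immediately yields the existence statement and the continuity of $f_W$ claimed in the theorem; no separate argument is needed beyond the verification of the three hypotheses above.

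It then remains to pin down the leading term of $f_W$ as $a\to 0$. The mechanism is that, at the level of large deviations, the tail of $W_n$ is governed by the linear (H\'ajek) part of its Hoeffding decomposition, namely by the i.i.d.\ sum $\frac{2}{n}\sum_{i=1}^n \phi_\mu(X_i)$; the canonical degenerate components of higher order have lighter tails and contribute only to the $o(1)$ correction. Since $\mathrm{Var}\bigl(\frac{2}{n}\sum_{i=1}^n \phi_\mu(X_i)\bigr)=\frac{4\Delta_W^2(\mu)}{n}$, the classical Cram\'er expansion of this sum produces a rate function whose quadratic leading term is $\frac{a^2}{2\cdot 4\Delta_W^2(\mu)}=\frac{a^2}{8\Delta_W^2(\mu)}$, exactly the asserted asymptotics. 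Equivalently, this matches the Gaussian tail predicted by Theorem~1, where $\sqrt{n}W_n$ is asymptotically $\mathcal N(0,4\Delta_W^2(\mu))$: the large-deviation rate $a^2/(2\sigma^2)$ with $\sigma^2=4\Delta_W^2(\mu)$ reproduces $f_W(a)\sim a^2/(8\Delta_W^2(\mu))$.

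The main obstacle --- and the one genuinely absorbed by the cited large-deviation theorems rather than proved by hand --- is the justification that the degenerate part of the Hoeffding decomposition is exponentially negligible, so that the rate function of the whole statistic coincides to leading order with that of its projection. Once boundedness and non-degeneracy are in place this reduction is automatic, which is why the substance of the argument is concentrated in the (already completed) verification of those two properties, together with the elementary computation of the variance factor $4\Delta_W^2(\mu)$.
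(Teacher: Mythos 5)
Your proposal is correct and follows exactly the paper's route: the paper proves this theorem simply by checking that the kernel $\Phi$ is centered, bounded and non-degenerate and then invoking the large-deviation theorem for $U$-statistics from \cite{nikiponi} and \cite{Niki10}, whose rate function for a degree-2 kernel has leading term $a^2/(8\Delta_W^2(\mu))$. Your additional remarks on the H\'ajek projection and the matching Gaussian tail are a faithful expansion of what those cited results deliver, not a different argument.
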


\subsection{Some notions from Bahadur theory}

\indent Suppose that under the alternative $H_1$ the observations have the df $G(\cdot,\theta)$ and the density $g(\cdot,\theta), \ \theta \geq 0,$ such that
$G(\cdot, 0)$ is the exponential df with some scale parameter.
The measure of Bahadur efficiency (BE) for any sequence $\{T_n\}$ of test statistics is the exact slope
$c_{T}(\theta)$ describing the rate of exponential decrease for the
attained level under the alternative df $G(\cdot,\theta).$ According to Bahadur theory  \cite{Bahadur}, \cite{Nik} the exact slopes may be found by
using the following Proposition.

\noindent {\bf Proposition}. \,{\it Suppose that two following
conditions hold:
\[
\hspace*{-3.5cm} \mbox{a)}\qquad  T_n \
\stackrel{\mbox{\scriptsize $\P_\theta$}}{\longrightarrow} \
b(\theta),\qquad \theta > 0,\nonumber \] where $-\infty <
b(\theta) < \infty$, and $\stackrel{\mbox{\scriptsize
$\P_\theta$}}{\longrightarrow}$ denotes convergence in probability
under $G(\cdot\ ; \theta)$;
\[
\hspace*{-2cm} \mbox{b)} \qquad \lim_{n\to\infty} n^{-1} \ \ln \
\P_{H_0} \left( T_n \ge t \ \right) \ = \ - f_T(t)\nonumber
\] for any $t$ in an open interval $I,$ on which $f_T$ is
continuous and $\{b(\theta), \: \theta > 0\}\subset I$. Then
$$c_T(\theta) \ = \ 2 \ f_T(b(\theta)).$$}

We have already found the large deviation asymptotics necessary for b). In order to evaluate the exact slope it remains to verify the condition a) of this Proposition which represents some form of the Law of Large Numbers under the alternative.

Note that the exact slopes for any $\theta$ satisfy the inequality (see \cite{Bahadur}, \cite{Nik})
\begin{equation}
\label{Ragav}
c_T(\theta) \leq 2 KL(\theta),
\end{equation}
where $KL(\theta)$ is the Kullback-Leibler "distance"\, between the alternative and the null-hypothesis $H_0.$
In our case $H_0$ is composite, hence for any alternative density $g_j(x,\theta)$ one has
\begin{equation}\label{kul}
KL_j(\theta) = \inf_{\lambda>0} \int_0^{\infty} \ln \big[g_j(x,\theta) / \lambda \exp(-\lambda x) \big] g_j(x,\theta) \ dx.
\end{equation}
This quantity can be easily calculated as $\theta \to 0$ for particular alternatives.
According to (\ref{Ragav}), the local BE of the sequence of statistics ${T_n}$ is defined as
$$
e^B (T) = \lim_{\theta \to 0} \frac{c_T(\theta)}{2KL(\theta)}.
$$

\bigskip

\subsection{Local Bahadur efficiency of $W_n.$}

\begin{nt}\label{nt}
Denote by $\cal G$
the class of densities $ g(\cdot \ ,\theta)$ with the df's $G(\cdot \ ,\theta), \theta \ge 0,$ which satisfy the regularity conditions from
\cite[Ch.6]{Nik} with possibility to differentiate with respect to $\theta$ under the integral sign in all appearing integrals.
\end{nt}

We present the following alternatives against exponentiality which will be considered for both tests
 in this paper:

\begin{enumerate}
\item[i)] Weibull distribution with the density
$$g_1(x,\theta)=(1+\theta)x^\theta \exp(-x^{1+\theta}),\theta \geq 0, x\geq 0;$$

\item[ii)] Gamma distribution with the density
$$g_2(x,\theta)=\frac{x^{\theta}}{\Gamma(\theta+1)}e^{-x}, \theta \geq 0, x\geq 0;$$

\item[iii)] exponential mixture with negative weights (EMNW($\beta$)) (see \cite{vjevremovic})
$$ g_3(x)=(1+\theta)e^{-x}-\theta\beta e^{-\beta x}, \theta\in \Big[0,\frac{1}{\beta-1}\Big], \beta >1, x\geq 0;$$

\item[iv)] exponential distribution with the resilience parameter, or the Verhulst distribution (see \cite[p.333]{MO})   with the density
$$
g_4(x,\theta) = (1+\theta)\exp(-x)(1 - \exp(-x))^{\theta}, \theta \geq 0, x \geq 0.
$$
\end{enumerate}

From \eqref{kul} one can find the Kullback-Leibler "distance" for each alternative as $\theta \to 0$:
\begin{gather}\label{k}
KL_1(\theta) \sim \frac{\pi^2}{12}\theta^2;\quad
KL_2(\theta) \sim \left(\frac{\pi^2}{12}-\frac12 \right) \theta^2;\notag \\
KL_3(\theta) \sim \frac{(\beta-1)^4}{2\beta^2(2\beta-1)}\theta^2; \quad
KL_4(\theta) \sim \left(\frac{\pi^2}{6}-\frac{\pi^4}{72}\right)\theta^2.
\end{gather}

For statistic $W_n$ we can derive the following asymptotics as $\theta \to 0$ from \cite{NiPe}.
\begin{lem}\label{b_W}
For a given alternative density $g(x,\theta)$ from the class $\cal G$ (see Notation~\ref{nt}) under condition $\int_0^{\infty} | g''_{\theta \theta}(x,0)| dx < \infty$ we get
\begin{gather*}
b_W(\theta)\sim 2\theta  \int_{0}^{\infty} \phi_{\mu}(x)h(x)dx, \text{ where }
h(x)=g'_{\theta}(x,0).
\end{gather*}
\end{lem}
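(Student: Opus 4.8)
The plan is to identify the probability limit $b_W(\theta)$ explicitly and then extract its leading term as $\theta\to 0$. Since $W_n$ is, by the preceding subsection, a $U$-statistic of degree $2$ with the \emph{bounded} kernel $\Phi$, the law of large numbers for $U$-statistics applies under the alternative $G(\cdot,\theta)$, giving
$$
W_n \;\stackrel{\P_\theta}{\longrightarrow}\; b_W(\theta):= E_\theta\,\Phi(X,Y)=\int_0^\infty\!\!\int_0^\infty \Phi(x,y)\,g(x,\theta)\,g(y,\theta)\,dx\,dy,
$$
where $X,Y$ are independent with density $g(\cdot,\theta)$. This already furnishes condition a) of the Proposition with a finite limit, so the task reduces to computing the asymptotics of $b_W(\theta)$ near $\theta=0$.

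First I would record that at $\theta=0$ the density is the standard exponential $g(x,0)=e^{-x}$, and since $\Phi$ was centered under $H_0$ we have $b_W(0)=0$; hence the leading behaviour is controlled by $b_W'(0)$. I would obtain it by differentiating the double integral under the integral sign and applying the product rule to $g(x,\theta)g(y,\theta)$, which yields at $\theta=0$
$$
b_W'(0)=\int_0^\infty\!\!\int_0^\infty \Phi(x,y)\big[h(x)e^{-y}+e^{-x}h(y)\big]\,dx\,dy,\qquad h(x)=g'_\theta(x,0).
$$
Because the kernel is symmetric, $\Phi(x,y)=\Phi(y,x)$, the two summands coincide, so
$$
b_W'(0)=2\int_0^\infty h(x)\Big(\int_0^\infty \Phi(x,y)e^{-y}\,dy\Big)\,dx.
$$
The inner integral is precisely $E_{H_0}(\Phi(X,Y)\mid X=x)=\phi_\mu(x)$, the projection computed earlier. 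Thus $b_W'(0)=2\int_0^\infty \phi_\mu(x)h(x)\,dx$, and a first-order Taylor expansion gives the asserted $b_W(\theta)\sim 2\theta\int_0^\infty \phi_\mu(x)h(x)\,dx$.

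The genuinely delicate points are the two analytic justifications, which follow the scheme of \cite{NiPe}. The differentiation under the integral sign and the control of the Taylor remainder are what the hypotheses are for: writing $b_W(\theta)=b_W'(0)\theta+\tfrac12 b_W''(\xi)\theta^2$ for some $\xi\in(0,\theta)$, one needs $b_W''$ bounded near $0$, and this is guaranteed by the boundedness of $\Phi$ together with the assumption $\int_0^\infty |g''_{\theta\theta}(x,0)|\,dx<\infty$ and the regularity conditions defining the class $\cal G$, so that the remainder is $o(\theta)$. A useful consistency check is that $\int_0^\infty g(x,\theta)\,dx\equiv 1$ forces $\int_0^\infty h(x)\,dx=0$, which makes the constant terms $1-\mu e^{\mu}E_1(\mu)$ of $\phi_\mu$ drop out of $\int_0^\infty\phi_\mu h\,dx$ and leaves an informative expression. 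Given the assumptions these verifications are routine; the only model-specific labour is checking the integrability hypothesis and evaluating $\int_0^\infty \phi_\mu h\,dx$ for each of the alternatives i)--iv).
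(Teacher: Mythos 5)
Your proposal is correct and follows exactly the route the paper relies on: it invokes the law of large numbers for $U$-statistics to identify $b_W(\theta)=E_\theta\Phi(X,Y)$, Taylor-expands at $\theta=0$ using the symmetry of $\Phi$ to recognize the projection $\phi_\mu$, and uses the boundedness of $\Phi$ together with $\int_0^\infty|g''_{\theta\theta}(x,0)|\,dx<\infty$ to control the remainder --- which is precisely the scheme of the cited reference \cite{NiPe} and the same argument the paper carries out explicitly for the analogous Lemma~\ref{b_D}.
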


We take $\mu=2$ for definiteness in the exponential weight $\mu e^{-\mu t}$,   so for this case the variance is $\Delta_W^2(2)= 0.0028.$
Using \eqref{k} we gather in Table \ref{fig: tab2} the values of function $b_W(\theta),$ local exact slopes  as $\theta \to 0$ and local BE
for statistics $W_n.$ In the case of the third alternative EMNW we take the value $\beta =3$ as in the recent paper \cite{Milo}. All this was obtained using the MAPLE package.
\begin{table}[!hhh]\centering
\caption{Local Bahadur efficiency for $W_n, \mu=2$ with $\Delta_W^2(\mu)= 0.0028.$}
\bigskip
\begin{tabular}{|c|c|c|c|}
\hline
Alternative &  $b_W(\theta)$& $c_W(\theta)$& Efficiency\\
\hline
Weibull &    0.123 $\theta$ & 1.357 $\theta^2$  & 0.825 \\
Gamma &  0.081 $\theta$ & 0.590  $\theta^2$  & 0.915  \\
EMNW ($\beta =3$)  &     0.056 $\theta$ & 0.284 $\theta^2$ &  0.800\\
Verhulst& 0.078 $\theta$      &0.541 $\theta^2$&0.927\\
\hline
\end{tabular}
\label{fig: tab2}
\end{table}
We observe here {\it remarkably high} values of local BE for common alternatives.

In Table \ref{fig: tabsimw} we present the simulated powers for our alternatives when  $\mu=2.$ The simulations have been performed for $n=100$ with 10,000 replicates
for the appropriate significance level $\alpha.$

\begin{table}[htbp]
\centering
\bigskip
\caption{Simulated powers for statistic $|W_n|, \mu=2.$ }
\bigskip
\centering
\begin{tabular}{|c|c|ccc|}
  \hline
 Alternative & $\theta$ &  $\alpha=0.1$ & $\alpha=0.05$ & $\alpha=0.01$ \\
  \hline
 Weibull&0.5 &  0.999 &0.997 &0.985  \\
   &0.25 & 0.822 & 0.717 & 0.499 \\
 \hline
  Gamma&0.5 & 0.922& 0.856 & 0.669   \\
   &0.25 & 0.506 & 0.366 & 0.186   \\
 \hline
  EMNV ($\beta=3$) &0.5 & 0.997 & 0.992 &0.959   \\
   &0.25 & 0.513 & 0.378& 0.193   \\
 \hline
 Verhulst&0.5 & 0.890 & 0.804 &0.600\\
   &0.25 & 0.467 & 0.333& 0.161 \\
 \hline
 \end{tabular}
\label{fig: tabsimw}
\end{table}

Note that there is no theoretical reasons for closeness of local efficiencies to the powers. However, if we take, for instance, the "realistic" values $\theta =0.5$ and $\alpha =0.05,$ then the ordering of tests is similar under both criteria. At the same time, the local BE under Weibull and Gamma alternatives has been calculated for many tests of exponentiality, see, e.g., \cite{Niknik}, \cite{NikTchir}, \cite{NikVol}, \cite{Pisk}, \cite{NikiVolki}, \cite{NTch}, \cite{Milo}.
It can be supposed that our test statistic $W_n$ is probably  more efficient than the tests considered in these papers.
So we may hope that the new test based on $W_n$ is able to reject the exponentiality hypothesis when the other tests are unfit for it. See section 4 below for partial confirmation of this.

 \section{ Kolmogorov-type statistic $D_{n}$}

\indent Now we consider the Kolmogorov type statistic (\ref{D_n}).
For fixed  $t$ the difference $\frac{t}{1+t}-H_n(t)$
is a family of $U$-statistics with the kernel $\Xi$ depending on $t \geq 0:$
\begin{gather*}
\Xi(X_{i},X_{j}; t)=\frac{t}{1+t}-
\frac{1}{2}\textbf{1}\{\frac{X_{i}}{X_{j}}<t\}-\frac{1}{2}\textbf{1}\{\frac{X_{j}}{X_{i}}<t\}.
\end{gather*}

Let $X, Y$ be independent rv's with standard exponential distribution. The projection of this kernel $\xi(s;t)$ for fixed
$t \geq 0$ has the form:
\begin{gather*}
\xi(s; t) := E(\Xi(X, Y; t)\mid X=s) =\frac{t}{1+t}-\frac12\P(\frac{s}{Y}< t)-\frac12\P(\frac{Y}{s}< t).
\end{gather*}

After simple calculations we get the expression for the family of projections:
\begin{align}
\xi(s; t) =\frac{t}{1+t}-\frac12 e^{-\frac{s}{t}}+\frac12e^{-st}-\frac12 .\label{xi}
\end{align}

It is easy to see that $E(\xi (X; t))=0$.
The variance of this projection $\delta^2(t) = E\xi^2(X; t)$ under $H_{0}$
is  given by
\begin{equation*}
\delta^2(t) = \frac{t(t-1)^2(t^2+3t+1)}{4(t+1)^2(t+2)(t^3+(t+1)^3)}.
\end{equation*}
Hence,
\begin{equation*}
\delta^2=\sup_{ t \geq 0} \delta^2(t)\approx 0.00954.
\end{equation*}
This value will be important in the sequel when calculating the large deviation asymptotics.

\begin{figure}[h!]
\begin{center}
\includegraphics[scale=0.35]{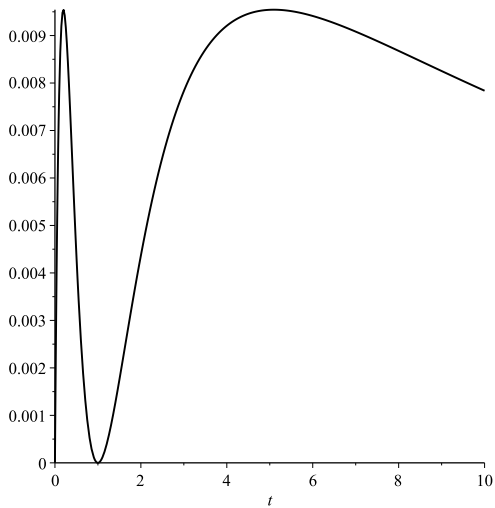}\caption{Plot of the function $\delta^2(t)$ }
\end{center}
\end{figure}

The limiting distribution of the statistic $D_n$ is unknown. Using the methods of \cite{Silv}, one can show that the
$U$-empirical process
$$\eta_n(t) =\sqrt{n} \left(\frac{t}{1+t} - H_n(t) \right), \ t\geq 0,
$$
weakly converges in $D(0,\infty)$ as $n \to \infty$ to certain centered Gaussian
process $\eta(t)$ with calculable covariance. Then the sequence of statistics
$\sqrt{n} D_n$ converges in distribution to  $\sup_{t\geq 0} |\eta(t)|.$ Currently we are not able to find explicitly its distribution.
Hence it is reasonable to determine the critical values for statistics  $D_n $ by simulation.

Table \ref{fig: tab6} shows the critical values of the null distribution of $D_n$ for significance
levels $\alpha = 0.1, 0.05, 0.01$ and specific sample sizes $n.$ Each entry is obtained by using the Monte-Carlo simulation
methods with 10,000 replications.

\begin{table}[htbp]
\centering
\bigskip
\caption{Critical values for the statistic $D_n$. }
\bigskip
\centering
\begin{tabular}{c|ccc}

$n$ &  0.1 &  0.05 &  0.01  \\
  \hline
   10    & 0.14  & 0.16 & 0.20 \\
  20 & 0.09  & 0.10 & 0.13  \\
   30    & 0.07  & 0.08 & 0.10  \\
  40     & 0.06  & 0.07 & 0.09 \\
   50   & 0.05  & 0.06 & 0.07 \\
   100     & 0.04 & 0.04 & 0.05\\ \hline
\end{tabular}
\label{fig: tab6}
\end{table}

Now we obtain the logarithmic large deviation asymptotics of the
sequence of statistics  $D_n$ under $H_0.$
The family of kernels $\{\Xi(X, Y; t), t\geq 0\}$ is not only centered but also bounded. Using the results from \cite{Niki10} on
large deviations for the supremum  of nondegenerate $U$-statistics, we obtain the following result.
\begin{theorem}
For  $a>0$ under $H_0$
$$
\lim_{n\to \infty} n^{-1} \ln \P (  D_n >a) = - f_D(a),
$$
where the function  $f_D$ is continuous for sufficiently small $a>0,$ moreover $$
f_D(a) = \frac{a^2}{8 \delta^2} (1 + o(1))  \sim 13.103 a^2, \, \mbox{as}
\, \, a \to 0.
$$
\end{theorem}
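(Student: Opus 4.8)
The plan is to realize $D_n$ as the supremum of the absolute value of a family of non-degenerate $U$-statistics and then invoke the large-deviation theorem for such suprema from \cite{Niki10}, whose hypotheses have largely been prepared above. Write $U_n(t) = \frac{t}{1+t} - H_n(t)$, so that $U_n(t)$ is a $U$-statistic of degree $2$ with the centered kernel $\Xi(X,Y;t)$ and $D_n = \sup_{t\geq 0}|U_n(t)|$. The first step is to record the three structural properties that the theorem requires: the family $\{\Xi(\cdot,\cdot;t)\}_{t\geq 0}$ is uniformly bounded (each kernel takes values in a fixed bounded set, since $\frac{t}{1+t}\in[0,1)$ and the indicators lie in $\{0,1\}$), it is centered (already verified, $E\xi(X;t)=0$), and it is non-degenerate, the projection variance $\delta^2(t)=E\xi^2(X;t)$ being strictly positive for a range of $t$.

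The second step is to locate and control the maximal variance, since the leading-order rate will be governed by $\delta^2=\sup_{t\geq 0}\delta^2(t)$. Here the explicit rational expression for $\delta^2(t)$ is decisive: it is continuous on $[0,\infty)$, and a short boundary analysis shows that $\delta^2(t)\to 0$ both as $t\to 0$ and as $t\to\infty$ (the numerator vanishes like $t$ at the origin, while $\delta^2(t)\sim (8t)^{-1}$ at infinity), and moreover $\delta^2(1)=0$ because $\xi(s;1)\equiv 0$. Consequently the supremum is attained at an interior point, and one reads off $\delta^2\approx 0.00954$. This reduction to a compact effective index set is what allows the abstract theorem, stated for the parameter $t$, to be applied over the non-compact range $t\in[0,\infty)$.

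The third step is to apply the theorem of \cite{Niki10} on large deviations for the supremum of a non-degenerate family of $U$-statistics. Granting its regularity conditions (uniform boundedness, non-degeneracy, and continuity of the variance function, all checked in the previous steps), the theorem yields the existence of the limit $\lim_{n\to\infty} n^{-1}\ln\P(D_n>a)=-f_D(a)$ with $f_D$ continuous for small $a>0$, together with the local form $f_D(a)\sim \frac{a^2}{8\delta^2}$ as $a\to 0$. The same $\frac{1}{8\delta^2}$ normalization already appeared for the integral statistic in Theorem 2, reflecting the Gaussian approximation $\sqrt{n}\,U_n(t)\approx {\cal N}(0,4\delta^2(t))$; substituting $\delta^2\approx 0.00954$ gives $\frac{1}{8\cdot 0.00954}\approx 13.103$, which completes the statement.

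I expect the main obstacle to be justifying that passing to the supremum over the infinite family $\{t\geq 0\}$ does not inflate the leading-order rate beyond that of the single worst-case projection. This is precisely the delicate content of \cite{Niki10}: for small $a$ the large-deviation event is dominated by the value of $t$ that maximizes $\delta^2(t)$, and one must ensure the family is regular enough — a measurability and equicontinuity-type condition, together with the compactness of the effective index set established in the second step — so that the contributions of neighbouring values of $t$ do not accumulate. Verifying these hypotheses, rather than any fresh computation, is where the real work lies.
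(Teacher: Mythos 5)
Your proposal is correct and follows essentially the same route as the paper: the paper's argument is precisely to note that the family of kernels $\Xi(\cdot,\cdot;t)$ is centered, bounded and non-degenerate, identify $\delta^2=\sup_{t\ge 0}\delta^2(t)\approx 0.00954$, and invoke the large-deviation theorem for suprema of non-degenerate $U$-statistics from \cite{Niki10}. Your additional boundary analysis of $\delta^2(t)$ (vanishing at $t\to 0$, $t\to\infty$ and $t=1$) and the explicit check that $1/(8\delta^2)\approx 13.103$ are correct refinements of what the paper leaves implicit.
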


\subsection{Local Bahadur efficiency of $D_n$}

\begin{lem}\label{b_D}
For a given alternative density $g(x,\theta)$ from the class $\cal G$ (see Notation~\ref{nt}) we have
\begin{gather*}
b_D(t,\theta) \sim 2\theta  \int_{0}^{\infty} \xi(x; t)h(x)dx, \text{ where }
h(x)=g'_{\theta}(x,0).
\end{gather*}
\end{lem}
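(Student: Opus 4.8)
The plan is to carry out, pointwise in $t$, the same argument that underlies the companion Lemma~\ref{b_W} for the integral statistic. For a fixed $t\geq 0$ the difference $\frac{t}{1+t}-H_n(t)$ is exactly the non-degenerate $U$-statistic with the bounded kernel $\Xi(\cdot,\cdot;t)$, so by the law of large numbers for $U$-statistics it converges in probability under $G(\cdot,\theta)$ to the expectation of its kernel. This furnishes the limit $b_D(t,\theta)$ required in condition a) of the Proposition, namely
$$
b_D(t,\theta)=\frac{t}{1+t}-\int_0^\infty\!\!\int_0^\infty \tfrac12\bigl(\textbf{1}\{x/y<t\}+\textbf{1}\{y/x<t\}\bigr)\,g(x,\theta)\,g(y,\theta)\,dx\,dy .
$$
At $\theta=0$ the density $g(\cdot,0)$ is standard exponential and the double integral equals $\frac{t}{1+t}$ by the property of exponential law recalled in Section~1, so $b_D(t,0)=0$. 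The claim therefore reduces to computing the derivative $\partial_\theta b_D(t,\theta)\big|_{\theta=0}$ and writing a one-term Taylor expansion.

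I would next differentiate under the integral sign, which is legitimate since $g(\cdot,\theta)\in{\cal G}$ and the indicators carry no $\theta$-dependence. The product rule gives $\partial_\theta[g(x,\theta)g(y,\theta)]=h(x)g(y,\theta)+g(x,\theta)h(y)$ with $h(x)=g'_\theta(x,0)$, and at $\theta=0$ the undifferentiated factor becomes $e^{-y}$ (respectively $e^{-x}$). Because the symmetrized indicator kernel is invariant under the swap $x\leftrightarrow y$, the two resulting double integrals coincide, so they combine into a single one with an overall factor $2$:
$$
\partial_\theta b_D(t,\theta)\big|_{\theta=0}
=-2\int_0^\infty\!\!\int_0^\infty \tfrac12\bigl(\textbf{1}\{x/y<t\}+\textbf{1}\{y/x<t\}\bigr)\,e^{-y}\,h(x)\,dx\,dy .
$$

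The decisive step is to recognize the inner $y$-integral as the projection itself: directly from the definition of $\xi$ in \eqref{xi} one has $\int_0^\infty \tfrac12(\textbf{1}\{x/y<t\}+\textbf{1}\{y/x<t\})e^{-y}\,dy=\frac{t}{1+t}-\xi(x;t)$. Substituting this and using $\int_0^\infty h(x)\,dx=\partial_\theta\!\int_0^\infty g(x,\theta)\,dx\big|_{\theta=0}=0$ to discard the constant multiple of $\frac{t}{1+t}$, I obtain $\partial_\theta b_D(t,\theta)\big|_{\theta=0}=2\int_0^\infty \xi(x;t)\,h(x)\,dx$, whence $b_D(t,\theta)\sim 2\theta\int_0^\infty \xi(x;t)\,h(x)\,dx$ as $\theta\to 0$. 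The only genuinely delicate points are the justification of differentiation under the integral sign and the control of the $o(\theta)$ remainder in the expansion; both are guaranteed by the regularity conditions defining the class ${\cal G}$ (Notation~\ref{nt}), so that, once the projection is identified, the computation is routine and entirely parallel to Lemma~\ref{b_W}.
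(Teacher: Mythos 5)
Your proposal is correct and follows essentially the same route as the paper: both identify $b_D(t,\theta)$ via the law of large numbers (Glivenko--Cantelli) for $U$-statistics, expand to first order in $\theta$ by differentiating under the integral sign, and use $\int_0^\infty h(x)\,dx=0$ to land on $2\int_0^\infty\xi(x;t)h(x)\,dx$. The only cosmetic difference is bookkeeping -- the paper writes $\P_\theta(X/Y<t)$ as an iterated integral and changes the order of integration, whereas you exploit the symmetry of the kernel and recognize the inner $y$-integral as $\frac{t}{1+t}-\xi(x;t)$ directly.
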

\begin{proof}
 By the Glivenko-Cantelli theorem
for $U$-statistics \cite{Jan} the limit in probability under the alternative for statistics $D_n$ is equal to
\begin{gather*}
b_D(t, \theta)=\frac{t}{1+t} - \P_{\theta}(\frac{X}{Y}< t).
\end{gather*}
Then as $\theta \rightarrow 0$
\begin{gather*}
b_D(t,\theta)=\frac{t}{1+t}
-\int_{0}^{\infty}g(y,\theta) dy \int_{0}^{yt}g(x,\theta) dx \sim b_D(t,0)+ b_D '(t,0)\cdot \theta.
\end{gather*}

It is easily seen that for $f(x) = e^{-x}, \ x \ge 0,$
\begin{eqnarray*}
J(0)&=&0, \\
J'(0)&=&-\int_{0}^{\infty}h(y) dy \int_{0}^{yt} f(x) dx-\int_{0}^{\infty}f(y) dy \int_{0}^{yt} h(x) dx.
\end{eqnarray*}
Changing the order of integration in the second integral we see that:
\begin{gather*}
J'(0)= -\int_{0}^{\infty}F(y t)h(y) dy
-\int_{0}^{\infty}h(x) dx \int_{x/t}^{\infty}f(y) dy =-\int_{0}^{\infty}h(y)\left(F(y t)+1-F(y/t)\right)dy=\\
=2 \int_{0}^{\infty} \xi(x; t)h(x)dx.
\end{gather*}
\end{proof}

Therefore
\begin{gather*}
b_D(\theta):= \sup_{t \geq 0}|b_D(t, \theta)| \sim \sup_{t \geq 0}| 2\theta  \int_{0}^{\infty} \xi(x; t)h(x)dx|.
\end{gather*}
We can find the asymptotics of $b_D(\theta)$ for each alternative as $\theta \to 0$:
\begin{gather*}
b_D^1(\theta):= \sup_{t \geq 0}|-\frac{t\ln(t)}{(t+1)^2}\theta| \sim 0.2239 \theta,\\
b_D^2(\theta):= \sup_{t \geq 0}|\frac{(t-1)\ln(t+1)-t\ln(t)}{(t+1)}\theta| \sim 0.1468 \theta,\\
b_D^3(\theta):= \sup_{t \geq 0}|\frac{(\beta-1)^2t(t-1)}{(t+1)(\beta+t)(t\beta+1)}\theta| = \sup_{t \geq 0}|\frac{4t(1-t)}{(t+1)(t+3)(3t+1))}\theta| =\\ = 0.1056 \cdot\theta \, \text{under} \, \beta =3,\\
b_D^4(\theta) \sim 0.1406 \theta.
\end{gather*}

\begin{figure}[h!]
\begin{minipage}[t]{0.33\textwidth}
\includegraphics[scale=0.21]{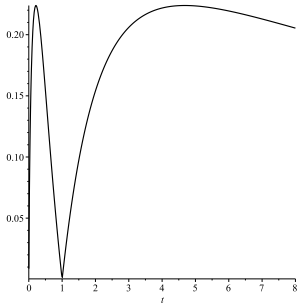}\caption{Plot of the func\-tion $b_D^1(t, \theta)$ }
\end{minipage}
\begin{minipage}[t]{0.33\textwidth}
\includegraphics[scale=0.21]{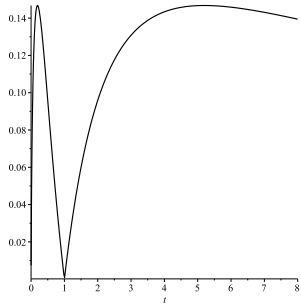}\caption{Plot of the function $b_D^2(t, \theta)$ }
\end{minipage}
\begin{minipage}[t]{0.32\textwidth}
\includegraphics[scale=0.21]{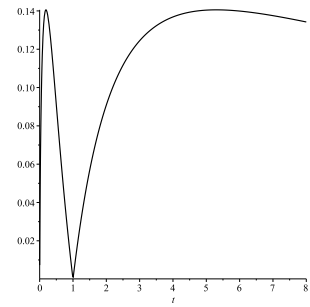}\caption{Plot of the function $b_D^4(t, \theta)$ }
\end{minipage}
\end{figure}

We cannot find the explicit formula for $b_D^4(t, \theta),$ and are forced to evaluate the maximal value of the $b_D^4(\theta)$ by using the numerical methods with MAPLE package. Again using (5) we present in Table 4 the values of exact slopes  when $\theta \to 0$ and the local Bahadur efficiencies against our four alternatives for statistics $D_n.$
\begin{table}[!hhh]\centering
\caption{Local Bahadur efficiency for $D_n$.}
\bigskip
\begin{tabular}{|c|c|c|c|}
\hline
Alternative &   $c_D(\theta)$& Efficiency\\
\hline
Weibull &    1.313 $\theta^2$  & 0.798 \\
 Gamma &    0.564  $\theta^2$  & 0.875  \\
EMNW ($\beta =3$)   & 0.292  $\theta^2$ &  0.821\\
Verhulst&0.518 $\theta^2$&0.886\\
\hline
\end{tabular}
\label{fig: tab3}
\end{table}

We see that the efficiency is reasonably high in all four examples. Moreover, it is {\it much higher} than usual values of efficiency for Kolmogorov test. In Table \ref{fig: tabsimd} we present the simulated powers for our four alternatives.
Again the simulations have been performed for $n=100$ with 10,000 replicates.

\begin{table}[htbp]
\centering
\bigskip
\caption{Simulated powers for statistic $D_n$. }
\bigskip
\centering
\begin{tabular}{|c|c|ccc|}
  \hline
 Alternative & $\theta$ &  $\alpha=0.1$ & $\alpha=0.05$ & $\alpha=0.01$  \\
  \hline
 Weibull&0.5 &  0.999 &0.997 &0.976  \\
   &0.25 & 0.809 & 0.712 & 0.452  \\
 \hline
  Gamma&0.5 & 0.914 & 0.845 &0.622   \\
   &0.25  & 0.489 & 0.361& 0.155  \\
 \hline
 EMNV($\beta=3$)&0.5 & 0.996 & 0.991 & 0.941  \\
   &0.25 & 0.504 & 0.382 & 0.171   \\
 \hline
Verhulst &0.5 & 0.883 & 0.797 & 0.552  \\
   &0.25 &  0.454 & 0.330 & 0.136   \\
 \hline
 \end{tabular}
\label{fig: tabsimd}
\end{table}

\section{Application to real data}

\indent In this section we apply our tests to an interesting real data example. We examine the data on the lengths of rule for Western Roman
Emperors by chronology of Kienast \cite{Kh} as the most precise. We consider two periods of this chronology: the period historians call "decline and fall",
taken conditionally from Nerva (reign: 96 - 98 AD) to Theodosius I (reign: 379 - 395 AD) with $n=53$ and
full period in data extending back to the first Roman Emperor, Augustus (reign: 27 BC - 14 AD), to Theodosius I with $n=76.$
The chronology shows the dates of ascent and abdication (or death). In the cases where exist no specific day of month we select a mid-points as it was done in \cite{Khm} and \cite{Khmal}.

In these papers  the authors came to the surprising agreement of data with the exponential distribution.
However, they used {\it only one} test for exponentiality of Kolmogorov type proposed by Haywood and Khmaladze \cite{HK}, \cite{Khmal}, \cite{Khm}. This test is probably not sensitive enough, and the single agreement with the exponentiality stated by these authors does not convince us in the validity of $H_0.$  First evidence that exponentiality  fails for this data  appeared in \cite{Pisk}.

It is interesting to apply our new tests of exponentiality to this challenging problem. We were based on 10,000 simulations of exponential data and calculated the $p$-values of new statistics $W_n$ with $\mu =2$ and $D_n$ understanding them as the probability, under the assumption of exponentiality hypothesis $H_0$, of obtaining a result equal to or more extreme than what was actually observed when calculating the test statistics.
 We got that for $n=76$ all $p$-values are less than $10^{-4},$ and in case of $n=53$
we got the following $p$-values:
\begin{center}
 \begin{tabular}{c|cc|cc}
 &   \multicolumn{2}{|c|}{$n=53$ }  &  \multicolumn{2}{|c}{$n=76$ } \\
  \hline
  test &$|W_{n}|, \mu = 2$  &$D_{n}$ & $|W_{n}|, \mu = 2$  &$D_{n}$ \\
  \hline
 value&  0.048 & 0.095 &  0.050 & 0.096 \\
 $p$-value&  0.0011 & 0.0019 & $< 10^{-4}$ & $< 10^{-4}$ \\
\end{tabular}
\end{center}

\noindent The smaller is the p-value, the more compelling is the evidence that {\it the alternative} should be accepted. Therefore we conclude that all our tests strongly reject the exponentiality of this data with the attained significance  level less than $\alpha=0.002$.

In the recent paper by El-Barmi and McKeague \cite{Keague} the authors used for the  sample of durations of reigns of Roman Emperors their new test on the ordering of distributions of several independent samples. Let say that the  rv
$X_1$ with df $F_1$ is stochastically larger than the rv $X_2$ with df $F_2$, if $F_1(x) \geq F_2(x) $ for all $x.$ It is denoted as  $F_1 \succ F_2.$ The authors of  \cite{Keague} supposed that the three periods of history of Roman Empire which usually are called the Principate (27 BC - 235 AD),
the Crisis of III century (235 AD - 284 AD) and the Dominate(285 AD - 395 AD) consists of independent but non-identically distributed periods of reign with df's $F_1, F_2$ and  $F_3$, which are probably non-exponential. Actually their test witnesses in favor of the hypothesis: Dominate $\succ$ Principate $\succ$ Crisis, or equivalently $ F_3 \succ F_1 \succ F_2$  and most probably does not support the hypothesis of exponentiality, too.

We  tried also other tests of exponentiality. The hypothesis is steadily rejected for the full sample of 76 Emperors in virtue of the Moran \cite{Mor}, chi-square,  Gini and Lilliefors \cite{NTch} tests. In the case of smaller sample $n=53,$ which corresponds to the "decline and fall,"\ of the Roman Empire, the agreement with exponentiality appears more often. However, the Moran test still rejects the exponentiality, and our two tests proposed above also append their contribution to rejection of the hypothesis under discussion.

\section{Conditions of local asymptotic optimality} \label{localop}

In this section we are interested in conditions of local asymptotic optimality (LAO) in Bahadur sense
for both sequences of statistics $W_n$  and $D_n.$ This means to describe the local structure of the
alternatives for which the given statistic has maximal potential local efficiency so that the relation
$$
c_T(\theta) \sim 2 KL(\theta),\,\, \mbox{as} \, \, \theta \to 0,
$$
holds (see \cite{Bahadur}, \cite{Nik}, \cite{NikTchir}, \cite{NiPe}).
Such alternatives form the so-called domain of LAO for the given sequence of statistics $\{T_n\}$.

Let again consider the densities $ g(\cdot \ ,\theta)$ with the df's $G(\cdot \ ,\theta)$ from the class $\cal G$ (see Notation~\ref{nt}) . Define the functions
\begin{gather*}
H(x)=\frac{\partial}{\partial\theta}G(x,\theta)\mid
_{\theta=0},\quad
h(x)=\frac{\partial}{\partial\theta}g(x,\theta)\mid _{\theta=0}.
\end{gather*}

\noindent Suppose also that for $G$ from $\cal G$ the following regularity conditions hold:
\begin{gather*}
h(x)=H'(x), \,  x \geq 0, \quad \int_0^\infty h^2(x)e^{x}dx <  \infty  , \\
\frac{\partial}{\partial\theta}\int_0^\infty x g(x,\theta)dx \mid
_{\theta=0} \ = \ \int_0^\infty x h(x)dx.
\end{gather*}

It is easy to show, see also \cite{NikTchir}, that under these conditions
$$
2KL(\theta)\sim \bigg[\int_0^\infty h^2(x)e^{x}dx -\Big(\int_0^\infty x h(x)dx\Big)^2\bigg] \theta^2,\,\, \mbox{as} \, \,\theta \to 0.
$$

\subsection{LAO conditions for $W_n$}

Now consider the integral statistic  $W_n$  with the kernel  $\Phi(X,Y)$ and its projection $\phi_{\mu}(x)$ from \eqref{phi}.
Let us introduce the auxiliary function
\begin{equation}
\label{h0}
h_0(x) = h(x) - (x-1)\exp(-x)\int_0^\infty u h(u) du.
\end{equation}

Simple calculations show that
\begin{gather}\label{LaoD}
\int_0^{\infty} h^2(x)e^{x}dx -\Big(\int_0^{\infty} x h(x)dx\Big)^2 = \int_0^{\infty} h_0^2(x) e^{x} dx,\\
\int_{0}^{\infty} \phi_{\mu}(x)h(x)dx = \int_{0}^{\infty} \phi_{\mu}(x)h_0(x)dx.
\end{gather}

Using the asymptotics from Lemma~\ref{b_W}, we get that the local BE takes the form
\begin{eqnarray*}
e^B(W)&=& \lim_{\theta \to 0} \frac{\big(b_W(\theta)\big)^2}{ 8 \Delta_W^2 (\mu) KL(\theta)}=\\
&= &\Big( \int_{0}^{\infty} \phi_{\mu} (x)h_0(x)dx\Big)^2/\Big(
\int_{0}^{\infty}\phi_{\mu}^2(x) e^{-x}dx \cdot  \int_0^\infty h_0^2(x)e^{x}dx
 \Big).
\end{eqnarray*}

Therefore the distributions with   $h(x) = e^{-x}(C_1\phi_{\mu}(x)+
C_2(x-1))$ for some constants $C_1>0$ and $C_2$ form the LAO domain in the class $\cal G$.

The simplest example of
such alternative density is $$g(x,\theta)=e^{-x}\big[1+\theta
(1- \mu e^{\mu}E_1 (\mu)- \sqrt{\mu x}\,K_1(\sqrt{\mu x})- \frac{x}{2(\mu+x)})\big]$$  for small $\theta > 0$.

\subsection{LAO conditions for $D_n$}

Now let us consider the Kolmogorov type statistic  $D_n$  with the family of kernels  $\Xi$ and their
projections $\xi(x;t)$ from \eqref{xi}. After simple calculations we get
\begin{gather*}
\int_{0}^{\infty} \xi(x; t)h(x)dx = \int_{0}^{\infty} \xi(x; t )h_0(x)dx, \quad  \forall t \in [0,\infty).
\end{gather*}

For $h_{0}(x)$ defined in \eqref{h0}, using the asymptotics for  $ b_D(t,\theta)$ from Lemma~\ref{b_D}
and from \eqref{LaoD}, the efficiency is equal to
\begin{eqnarray*}
e^B (D)&=& \lim_{\theta \to 0}\frac{ \big(b_{D}(\theta)\big)^{2}} {\sup_{t\geq 0}\big(8
\delta^2(t)\big) KL(\theta) }\\
&=& \sup_{t\geq 0}\Big( \int_{0}^{\infty}
 \xi (x;t)h_0(x)dx\Big)^2 / \ \sup_{t\geq 0} \Big(
\int_{0}^{\infty}\xi^2 (x;t) e^{-x}dx \cdot \int_0^\infty h_0^2 e^{x} dx\Big).
\end{eqnarray*}

 From Cauchy-Schwarz inequality we obtain that efficiency is equal to 1 if
$ h(x)=e^{-x}\big(C_3\xi(x; t_0)+ C_4(x-1)\big)$
for $t_0= {\rm argmax}_{t\geq0}
\delta^2(t) $ and some constants $C_3>0$ and $C_4.$
The alternative densities having such function $h(x)$ form the domain of LAO in the
corresponding class. The simplest example is 
$$g(x,\theta)=e^{-x}\big[1+\theta \cdot
(\frac{t_0}{1+t_0}-\frac12 e^{-\frac{x}{t_0}}+\frac12e^{-xt_0}-\frac12)\big],$$ where
$$t_0= \underset{t\geq0}{\rm argmax} \Big(
 \frac{t(t-1)(t^3+2t^2-2t-1)}{4(t+1)^2(t+2)(t^3+(t+1)^3)}\Big)\approx \left[ \begin{aligned}0.1963 \\ 5.0949.\end{aligned}\right.$$

\section{Conclusion}

We have proposed in this paper two new tests of exponentiality which use a particular property of the exponential law but are not consistent against {\it any alternative.} In the same time they are rather sensitive against the deviations from exponentiality. This is sustained by their high local Bahadur efficiency and considerable power under common alternatives. Our tests were able to reject the exponentiality of
the sample of reigns of Roman emperors which was claimed by Khmaladze and his coauthors in \cite{HK}, \cite{Khmal}, \cite{Khm}. We hope that our tests will be useful in other delicate cases when one has to confirm the rejection of exponentiality hypothesis. Finally we have described the structure of "most favorable" alternatives to exponentiality under which our tests become locally optimal in Bahadur sense.

\section{Acknowledgements}

Research supported by grant RFBR No. 16-01-00258.

\medskip


\begin{thebibliography}{100}

\thispagestyle{empty}

\bibitem{Abr} M. Abramowitz, I. A. Stegun, eds. Handbook of mathematical functions: with formulas, graphs, and mathematical tables. No. 55. Courier Corporation, 1964.

\bibitem{Ahsan} M. Ahsanullah, G. G. Hamedani. Exponential Distribution: Theory and
Methods, NOVA Science, New York, 2010.

\bibitem{Ang} J. E. Angus. Goodness-of-fit tests for exponentiality based on a loss-of-memory type functional
equation. J. Statist. Plann. Infer. \textbf{6}(1982), N 3, 241 --
251.


\bibitem{A90} S. Asher. A survey of tests for exponentiality,
Commun. Stat.: Theory and Meth. \textbf{19}(1990), N 5, 1811 -- 1825.

\bibitem{Bahadur} R.~R.~Bahadur. Some limit theorems in statistics. SIAM, Philadelphia, 1971.

\bibitem{BB95} N. Balakrishnan, A. Basu. The exponential
distribution: theory, methods and applications, Gordon and Breach,
Langhorne, PA, 1995.

\bibitem{BarHen} L. Baringhaus, N. Henze. Tests of fit for exponentiality based
on a characterization via the mean residual life function, Stat. Papers, \textbf{41}(2000), N 2, 225 -- 236. 

\bibitem{DY}  K. A. Doksum,  B. S. Yandell.  Tests of exponentiality. Handbook of Statistics \textbf{4}, 1985,
579 -- 612.

\bibitem{Keague} H. El Barmi, I. McKeague. Empirical likelihood-based tests for stochastic ordering. Bernoulli, \textbf{19}(2013), N 1, 295-–307.

\bibitem{HK} J. Haywood,  E.Khmaladze. On distribution-free goodness-of-fit testing of exponentiality. J. of Econometrics, \textbf{143}(2008), N 1,  5-18.

\bibitem{HJS}
R.~Helmers, P.~Janssen, R.~Serfling.  Glivenko-Cantelli properties of some generalized empirical DF's and strong convergence of
generalized L-statistics. Probab. Theory Relat. Fields, \textbf{79}(1988), 75--93.


\bibitem{HM02} N. Henze, S. G. Meintanis. Goodness-of-fit tests based on a
 new characterization of the exponential distribution, Commun. Stat.: Theory and Meth.
 \textbf{31}(9) (2002), 1479 -- 1497.

\bibitem{HM05}
N. Henze, S. G. Meintanis. Recent and classical tests for exponentiality: a partial review with comparisons, Metrika \textbf{61}(2005), 29-45.

\bibitem{Hoeffding}
W.~Hoeffding. A class of statistics with asymptotically normal
distribution. Ann.\ Math.\ Statist., \textbf{19}(1948), 293-325.

\bibitem{Jan} P.~L.~Janssen. Generalized empirical distribution functions
with statistical applications. Limburgs Universitair Centrum,
Diepenbeek, 1988.

\bibitem{JansV} H. M. Jansen van Rensburg, J. W. H. Swanepoel. A class of goodness-of-fit tests based on a new
characterization of the exponential distribution, J. Nonparametr. Stat. \textbf{20}(2008), N 6, 539 -- 551.

\bibitem{vjevremovic} V. Jevremovi\'c. A note on mixed exponential distribution with negative weights, Stat. Probab. Lett. \textbf{11}(1991), N 3, 259-265.

\bibitem{Jovan} M. Jovanovi\'c, B. Milo\v sevi\'c, Y.Y. Nikitin, M. Obradovi\'c, K. Y. Volkova. Tests of exponentiality based on Arnold -- Villasenor characterization and their efficiencies. Computat. Statist. Data Anal., \textbf{90}(2015), 100-113.

\bibitem{Khmal} E. Khmaladze, R. Brownrigg, J. Haywood. Brittle power: On Roman Emperors and exponential lengths of rule. Stat. Probab. Lett. \textbf{77}(2007), 1248–1257.

\bibitem{Khm} E. V. Khmaladze. Statistical methods with applications to demography and life insurance.  Chapman and Hall/CRC, 2013, 242 pp.

\bibitem{Kh} D.~Kienast. R\"{o}mische Kaisertabelle: Grundz\"{u}ge r\"{o}mischen Kaiserchronologie.
Wissenschaftliche Buchgesellschaft, Darmstadt, 1990.

\bibitem{Kor} V.~S.~Korolyuk, Yu.~V.~Borovskikh.   Theory of  $U$-statistics.
Kluwer, Dordrecht, 1994.

\bibitem{Kotla} I. Kotlarski. On characterizing the Gamma and the Normal distribution. Pacific J. of Mathem., \textbf{20}(1967), N 1, 69 - 76.

\bibitem{Kotz} S. Kotz, F. W. Steutel. Note on a characterization of exponential distributions. Stat. and Probab. Letters \textbf{6}(1988), 201-203.

\bibitem{MO} A. Marshall, I. Olkin.  Life distributions. New York: Springer, 2007.

\bibitem{Mau} J. G. Mauldon. Characterizing properties of statistical distributions, Quarterly
Journal of Mathematics, Oxford Series, \textbf{7}(1956), 155-160.

\bibitem{Milo} B. Milo\v sevi\'c. Asymptotic efficiency of new exponentiality tests based on a characterization.    Metrika, \textbf{79}(2016), N 2,
221-236.

\bibitem{Mor} P. A. P. Moran. The random division of an interval. J. Roy. Statist. Soc.  \textbf{B13}(1951), 147-–160.

\bibitem{NJ02}   P. Nabendu, J. Chun, R. Crouse.  Handbook
of exponential and related distributions for engineers and
scientists, Chapman and Hall, 2002.


\bibitem{Nik} Ya.~Nikitin. Asymptotic efficiency of nonparametric tests.
Cambridge University Press, New York, 1995.

\bibitem{Niknik} Ya. Yu. Nikitin. Bahadur efficiency of a test of exponentiality based on a loss of memory type functional equation. J. Nonparam. Stat. \textbf{6}(1996), 13 -- 26.

\bibitem{Niki10}  Ya.~Yu.~Nikitin.   Large deviations of $U$-empirical
Kolmogorov-Smirnov tests, and their efficiency. J. Nonparam.
Stat., \textbf{22}(2010), 649 -- 668.

\bibitem{NiPe} Ya.~Yu.~Nikitin, I.~Peaucelle.  Efficiency and local optimality of distribution-free tests  based on $U$- and $V$- statistics. Metron,  \textbf{LXII}(2004), 185 - 200.

\bibitem{nikiponi} Ya.~Yu.~ Nikitin, E.~V.~Ponikarov.  Rough large deviation asymptotics of Chernoff type for von Mises functionals and $U$-statistics. Proc. of  St. Petersburg Math. Society \textbf{7}(1999), 124--167. Engl. transl. in AMS Transl., ser.2 \textbf{203}(2001), 107 - 146.

\bibitem{NikTchir} Ya.~Yu.~Nikitin, A.~V.~Tchirina.  Bahadur efficiency and local optimality of a test for the exponential distribution based on the
Gini statistic.  Stat. Meth. and Appl., \textbf{5}(1996), 163--175.

\bibitem{NTch}Ya.~Yu.~Nikitin, A.~V.~Tchirina. Lilliefors test for exponentiality: large deviations, asymptotic efficiency and conditions of local optimality. Math. Methods of Statistics, \textbf{16}(2007), N 1, 16 - 24.


\bibitem{NikVol} Ya. Yu. Nikitin, K. Yu. Volkova.  Asymptotic efficiency of
exponentiality tests based on order statistics characterization.
 Georgian Math. J. \textbf{17}(2010), N 4, 749 -- 763.

\bibitem{Pisk} Ya. Yu. Nikitin, I. K. Piskun. Testing exponentiality with application to historic data.   Proceedings of Workshops on Inverse Problems, Data, Mathematical Statistics and Ecology. Kozlov, V., Ohlson, M., von Rosen, D.(eds).  Link\"{o}ping University Electronic Press.  2011, 59 - 65 Permanent link: http://liu.diva-portal.org/smash/get/diva2:431256/FULLTEXT02.pdf

\bibitem{NikiVolki} Ya. Yu. Nikitin, K. Yu. Volkova. Exponentiality Tests Based on Ahsanullah's Characterization and their Efficiencies. Journ. of Mathem. Sciences, \textbf{204}(2015), N 1, 42 - 54.


\bibitem{NouArg} H.~A.~Noughabi, N. ~R.~ Arghami.  Testing exponentiality based on characterizations of the exponential distribution. J. of Stat. Comp. and Simul., \textbf{81}(2011), N 11, 1641 -- 1651.

\bibitem{Silv}  B.~W.~Silverman. Convergence of a class of empirical
distribution functions of dependent random variables. Ann. Probab. \textbf{11}(1983), 745-751.

 \bibitem{Volk} K.Y. Volkova. On asymptotic efficiency of exponentiality tests based on
Rossberg's characterization. J. Math. Sci. (N.Y.) \textbf{167}(2010), N 4, 486–494.

 \bibitem{Wie}
H. S. Wieand. A condition under which the
Pitman and Bahadur  approaches  to  efficiency  coincide. Ann. Stat., \textbf{4}(1976), 1003 -- 1011.

\end{thebibliography}
\end{document}